\providecommand{\U}[1]{\protect\rule{.1in}{.1in}}
\providecommand{\U}[1]{\protect\rule{.1in}{.1in}}
\providecommand{\U}[1]{\protect\rule{.1in}{.1in}}
\providecommand{\U}[1]{\protect\rule{.1in}{.1in}}
\newtheorem{theorem}{Theorem}
\newtheorem{algorithm}[theorem]{Algorithm}
\numberwithin{equation}{section}
\numberwithin{theorem}{section}
\newtheorem{definition}[theorem]{Definition}
\newtheorem{lemma}[theorem]{Lemma}
\newtheorem{problem}[theorem]{Problem}
\newtheorem{proposition}[theorem]{Proposition}
\newtheorem{remark}[theorem]{Remark}
\newenvironment{proof}[1][Proof]{\textbf{#1.} }{\ \rule{0.5em}{0.5em}}
\begin{document}

\title{\textbf{The Implicit Convex Feasibility Problem and Its Application to
Adaptive Image Denoising\bigskip}}
\author{Yair Censor$^{1}$, Aviv Gibali$^{2}$, Frank Lenzen$^{3}$ and Christoph
Schn\"{o}rr$^{3}$\bigskip\\$^{1}$Department of Mathematics, University of Haifa,\\Mt.\ Carmel, 3498838 Haifa, Israel\smallskip\\$^{2}$Department of Mathematics, ORT Braude College,\\2161002 Karmiel, Israel\smallskip\\$^{3}$ Heidelberg Collaboratory for Image Processing,\\Mathematikon, INF 205, University of Heidelberg,\\69120 Heidelberg, Germany\smallskip}
\date{March 20, 2016. Revised: June 14, 2016.} \maketitle

\begin{abstract}
The implicit convex feasibility problem attempts to find a point
in the intersection of a finite family of convex sets, some of
which are not explicitly determined but may vary. We develop
simultaneous and sequential projection methods capable of handling
such problems and demonstrate their applicability to image
denoising in a specific medical imaging situation. By allowing the
variable sets to undergo scaling, shifting and rotation, this work
generalizes previous results wherein the implicit convex
feasibility problem was used for cooperative wireless sensor
network positioning where sets are balls and their centers were
implicit.\bigskip

\textbf{Keywords}: Implicit convex feasibility $\cdot$ split feasibility
$\cdot$ projection methods $\cdot$ variable sets $\cdot$ proximity function
$\cdot$ image\ denoising

\end{abstract}

\section{Introduction}

In this paper we are concerned with the following \textquotedblleft implicit
convex feasibility problem\textquotedblright\ (ICFP). Given set-valued
mappings $C_{s}:\mathbb{R}^{n}\rightarrow2^{\mathbb{R}^{n}}$, $s=1,2,\ldots,S,$\ with
closed and convex value sets, the ICFP is,%

\begin{equation}
\text{Find a point }x^{\ast}\in\cap_{s=1}^{S}C_{s}(x^{\ast}).
\end{equation}

We call the sets $C_{s}(x)$ \textquotedblleft variable sets\textquotedblright%
\ for obvious reasons and include \textquotedblleft implicit\textquotedblright%
\ in this problem name because the sets defining it are not given explicitly
ahead of time. The problem is inspired by the work of Gholami et al.
\cite{gtsc13} on solving the cooperative wireless sensor network positioning
problem in $\mathbb{R}^{2}$\ ($\mathbb{R}^{n}$). There, the sets $C_{s}(x)$\ are circles (balls)
with varying centers. A special instance of the ICFP is obtained by taking
fixed sets $C_{s}(x)\equiv C_{s},$\ for all $x\in \mathbb{R}^{n},$\ and all
$s=1,2,\ldots,S,$\ yielding the well-known, see, e.g., \cite{bb96},
\textquotedblleft convex feasibility problem\textquotedblright\ (CFP) which is,%

\begin{equation}
\text{Find a point }x^{\ast}\in\cap_{s=1}^{S}C_{s}.
\end{equation}

The CFP formalism is at the core of the modeling of many inverse
problems in various areas of mathematics and the physical
sciences. This problem has been widely explored and researched in
the last decades, see, e.g., \cite[Section 1.3]{Ceg12}, and many
iterative methods where proposed, in particular projection
methods, see, e.g., \cite{annotated}. These are iterative
algorithms that use projections onto sets, relying on the
principle that when a family of sets is present, then projections
onto the given individual sets are easier to perform than
projections onto other sets (intersections, image sets under some
transformation, etc.) that are derived from the given individual
sets.\\

Gholami et al. in \cite{gtsc13} introduced the implicit convex
feasibility problem (ICFP) in $\mathbb{R}^{d}$ ($d=2$ or $d=3$)
into their study of the wireless sensor network (WSN) positioning
problem. In their reformulation the variable sets are circles or
balls whose centers represent the sensors' locations and their
broadcasting range is represented as the radii. Some of these
centers are known a priori while the rest are unknown and need to
be determined. The WSN positioning problem is to find a point, in
an appropriate product space, which represents the circles or
balls centers. The precise relationship between the WSN problem
and the ICFP can be found in \cite[Section B]{gtsc13}. For more
details and other examples of geometric positioning problems, see
\cite{Gholami-thesis, gwsr11}.

We focus on the ICFP in $\mathbb{R}^{n}$\ and present projection
methods for its solution. This expands and generalizes the special
case treated in Gholami et al. \cite{gtsc13}. Moreover, we
demonstrate the applicability of our approach to the task of image
denoising, where we impose constraints on the image intensity at
every image pixel. Because the constraint sets depend on the
unknown variables to be determined, the method is able to adapt to
the image contents. This application demonstrates the usefulness
of the ICFP approach to image processing.

The paper is structured as follows. In Section \ref{sec:proj-implicit} we show
how to calculate projections onto variable sets. In Section \ref{sec:Algs} we
present two projection type algorithmic schemes for solving the ICFP,
sequential and simultaneous, along with their convergence proofs. In Section
\ref{sec:Special} we present the ICFP application to image denoising together
with numerical visualization of the performance of the methods. Finally, in
Section \ref{sec:summary} we discuss further research directions and propose a
further generalization of the ICFP.

\section{Projections onto variable convex sets\label{sec:proj-implicit}}

We begin by recalling the split
convex feasibility problem (SCFP) and the constrained multiple-set split
convex feasibility problem (CMSSCFP) that will be useful to our subsequent analysis.

\begin{problem}
\label{P:SCFP} Censor and Elfving \cite{ce94}. Given nonempty, closed and
convex sets $C\subseteq\mathbb{R}^{n},$ $Q\subseteq\mathbb{R} ^{m}$ and a
linear operator $T:\mathbb{R}^{n}\rightarrow\mathbb{R}^{m}$, the \texttt{Split
Convex Feasibility Problem} (SCFP) is:
\begin{equation}
\text{Find a point }x^{\ast}\in C\text{ such that }T(x^{\ast})\in Q.
\label{eq:scfp}%
\end{equation}

\end{problem}

Another related more general problem is the following.

\begin{problem}
\label{P:CMSSCFP} Masad and Reich \cite{mr07}. Let $r,p\in\mathbb{N}$ and
$\Omega_{s},$ $1\leq s\leq{S},$ and $Q_{r},$ $1\leq r\leq{R},$ be nonempty,
closed and convex subsets of $\mathbb{R}^{n}\ $and $\mathbb{R}^{m},$
respectively. Given linear operators $T_{r}:\mathbb{R}^{n}\rightarrow
\mathbb{R}^{m},$ $1\leq r\leq{R}$ and another nonempty, closed and convex
$\Gamma\subseteq\mathbb{R}^{n}$, the \texttt{Constrained Multiple-Set Split
Convex Feasibility Problem}\textit{\ }(CMSSCFP) is:
\begin{gather}
\text{Find a point }x^{\ast}\in\Gamma\text{ such that}\nonumber\\
x^{\ast}\in\cap_{s=1}^{S}\Omega_{s}\text{ and }T_{r}(x^{\ast})\in Q_{r}\text{
for each }r=1,2,\ldots,{R}. \label{eq:CMSSCFP-2}%
\end{gather}

\end{problem}

If $T_{r}\equiv T$ for all $r=1,2,\ldots,{R}$, then we obtain a multiple-set
split convex feasibility problem\textit{\ }(MSSCFP) \cite{cekb05}.

A prototype for the above SCFP and MSSCFP is the \textbf{Split
Inverse Problem }(SIP) presented in \cite{bcgr12,cgr12} and given
next.

\begin{problem} \label{P:SIP}
Given two vector spaces $X$ and $Y$ and a linear operator
$A:X\rightarrow Y$, we look at two inverse problems. One, denoted
by IP$_{1},$ is formulated in $X$ and the second, denoted by
IP$_{2}$, is formulated in $Y$. The \texttt{Split Inverse Problem}
(SIP) is:
\begin{equation}
\text{Find a point }x^{\ast}\in X\text{ that solves IP}_{1}
\text{such that } y^{\ast}=Ax^{\ast} \text{solves IP}_{2}.
\label{eq:sip}%
\end{equation}
\end{problem}

In \cite{bcgr12, cgr12} different choices for IP$_{1}$ and
IP$_{2}$ are proposed, such as variational inequalities and
minimization problems. The latter enable, for example, to obtain a
least-intensity feasible solution in intensity-modulated radiation
therapy (IMRT) treatment planning as in \cite{xcmg03}. In
\cite{gks14} we further explore and extend this modeling technique
to include non-linear mappings between the two spaces $X$ and $Y$.

Let $C\subseteq\mathbb{R}^{n}$ be a nonempty, closed and convex set. For each
point $x\in\mathbb{R}^{n}$,\ there exists a unique nearest point in $C$,
denoted by $P_{C}(x)$, i.e.,
\begin{equation}
\left\Vert x-P_{C}\left(  x\right)  \right\Vert \leq\left\Vert x-y\right\Vert
,\text{ for all }y\in C.
\end{equation}

The mapping $P_{C}:\mathbb{R}^{n}\rightarrow C$ is the \textit{metric projection} of
$\mathbb{R} ^{n}$ onto $C$. It is well-known that $P_{C}$ is a \textit{nonexpansive}
mapping of $\mathbb{R}^{n}$ onto $C$, i.e.,%

\begin{equation}
\left\Vert P_{C}\left(  x\right)  -P_{C}\left(  y\right)  \right\Vert
\leq\left\Vert x-y\right\Vert ,\ \text{for all }x,y\in\mathbb{R}^{n}.
\end{equation}

The metric projection $P_{C}$ is characterized by the following two
properties:%
\begin{equation}
P_{C}(x)\in C
\end{equation}
and%
\begin{equation}
\left\langle x-P_{C}\left(  x\right)  ,y-P_{C}\left(  x\right)  \right\rangle
\leq0,\text{ for all }x\in\mathbb{R} ^{n},\text{ }y\in C. \label{eq:ProjP1}%
\end{equation}

If $C$ is a hyperplane, then (\ref{eq:ProjP1}) becomes an equality,
\cite{GR84}. We are dealing with variable convex sets that can be described by
set-valued mappings.

\begin{definition}
\label{ex:1}For a set-valued mapping $C:\mathbb{R}^{n}\rightarrow
2^{\mathbb{R}^{n}},$ we call the sets $C(x)\subseteq\mathbb{R}^{n},$ defined
below, \textquotedblleft variable sets\textquotedblright. Let
$\Omega\subseteq\mathbb{R}^{n}$ be a given set, called in the sequel a
\textquotedblleft core set\textquotedblright.\medskip

(i) Given an operator $\mu:\mathbb{R}^{n}\rightarrow\mathbb{R}^{n}$, the
variable sets $C(x):=\Omega+\mu(x)=\left\{  y+\mu(x)\mid y\in\Omega\right\}
,$ for $x\in\mathbb{R}^{n}$, are obtained from shifting $\Omega$ by the
vectors $\mu(x)$.\medskip

(ii) Given an $x\in\mathbb{R}^{n}$ let $U_{\left[  x\right]  }:\mathbb{R}%
^{n}\rightarrow\mathbb{R}^{n}$ be a linear bounded operator. The variable sets
$C(x):=U_{\left[  x\right]  }(\Omega)=\left\{  U_{\left[  x\right]  }y\mid
y\in\Omega\right\}  $ are the $U_{\left[  x\right]  }$ images of $\Omega
$.\medskip

(iii) Given a function $f:\mathbb{R}^{n}\rightarrow\mathbb{R}_{+}$, the
variable sets $C(x):=f(x)\Omega=\left\{  f(x)y\mid y\in\Omega\right\}  ,$ for
$x\in\mathbb{R}^{n}$, are obtained from scaling $\Omega$ uniformly by $f(x)$.
This can be re-written as in (ii) with $U_{\left[  x\right]  }=f(x)I$, where
$I$ is the identity matrix.\medskip
\end{definition}

Next we present a lemma that shows how to calculate the metric projection onto
such variable sets via projections onto the core set $\Omega$ when the
operator $\mu$ is linear and denoted by the fixed matrix $A$ and $U_{\left[
x\right]  }$ is a constant unitary matrix denoted by $U$ (that is
$U^{T}U=UU^{T}=I$, where $U^{T}:\mathbb{R}^{n}\rightarrow\mathbb{R}^{n}$ is
the adjoint of $U$). Our proofs are based on Cegielski \cite[Subsection
1.2.3]{Ceg12}.

\begin{lemma}
\label{Lemma:1}Let $\Omega\subseteq\mathbb{R}^{n}$ be a nonempty, closed and
convex core set. Given a matrix $A$, a positive diagonal matrix $D=\alpha I,$
$\alpha>0,$ and a fixed unitary matrix $U$, the following holds for any
$z,x\in\mathbb{R}^{n}$%

\begin{equation}
P_{DU\left(  \Omega\right)  +Ax}\left(  z\right)  =DUP_{\Omega}\left(
D^{-1}U^{T}\left(  z-Ax\right)  \right)  +Ax.
\end{equation}

Since $D$ is a positive diagonal matrix, this can be re-written as%

\begin{equation}
P_{C(x)}\left(  z\right)  =\alpha UP_{\Omega}\left(  \frac{1}{\alpha}%
U^{T}\left(  z-Ax\right)  \right)  +Ax
\end{equation}

where $C(x)=\alpha U\left(  \Omega\right)  +Ax.$
\end{lemma}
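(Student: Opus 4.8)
The plan is to verify that the claimed right-hand side is indeed the metric projection by checking the two characterizing properties recalled just above the lemma, namely membership in the set together with the variational inequality (\ref{eq:ProjP1}); uniqueness of the nearest point then finishes the argument. To ease notation I would set $u:=D^{-1}U^{T}(z-Ax)$, let $q:=P_{\Omega}(u)$, and put $p:=DUq+Ax$, so that $p$ is exactly the candidate on the right-hand side. Since $q\in\Omega$ we immediately get $p=DUq+Ax\in DU(\Omega)+Ax=C(x)$, which settles membership.

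For the variational inequality I would take an arbitrary $w\in C(x)$, written as $w=DUy+Ax$ with $y\in\Omega$, and expand
\begin{equation*}
\langle z-p,\,w-p\rangle=\langle (z-Ax)-DUq,\;DU(y-q)\rangle.
\end{equation*}
The heart of the computation is to move $DU$ across the inner product via its adjoint. Because $D=\alpha I$ is symmetric and commutes with $U^{T}$, one has $(DU)^{T}=\alpha U^{T}$, and unitarity of $U$ gives $(DU)^{T}(DU)=\alpha^{2}U^{T}U=\alpha^{2}I$. Substituting $U^{T}(z-Ax)=\alpha u$, these identities collapse the expression to $\alpha^{2}\langle u-q,\,y-q\rangle$. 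Applying the characterization (\ref{eq:ProjP1}) of $P_{\Omega}$ at the point $u$ yields $\langle u-q,\,y-q\rangle\leq 0$ for every $y\in\Omega$, and since $\alpha^{2}>0$ we conclude $\langle z-p,\,w-p\rangle\leq 0$ for all $w\in C(x)$. This is precisely (\ref{eq:ProjP1}) for $C(x)$, so $p=P_{C(x)}(z)$; the second, scalar form follows by writing $D=\alpha I$ and $D^{-1}=\alpha^{-1}I$ throughout.

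The step I expect to require the most care is the adjoint bookkeeping in the middle display: one must use both that $D$ is a scalar matrix (so that it is self-adjoint and commutes with $U^{T}$, making $DU$ a genuine similarity that scales all distances by the single factor $\alpha$) and that $U$ is unitary (so that $U^{T}U=I$). If $D$ were a general positive diagonal matrix rather than $\alpha I$, the map $DU$ would no longer scale distances uniformly, the factor $\alpha^{2}$ would not pull out of the inner product, and the clean pull-back/push-forward formula would fail; pinpointing exactly where the scalarity of $D$ enters is the subtle point. A more conceptual but equivalent route would be to observe directly that the affine map $y\mapsto DUy+Ax$ multiplies all distances by $\alpha$, so minimizing $\|z-DUy-Ax\|$ over $y\in\Omega$ is equivalent to minimizing $\|u-y\|$, giving $q=P_{\Omega}(u)$ as the preimage of the nearest point; I would still present the inner-product verification, since it invokes (\ref{eq:ProjP1}) in the form stated in the excerpt.
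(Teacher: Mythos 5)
Your proposal is correct and follows essentially the same route as the paper: both verify the variational characterization (\ref{eq:ProjP1}) for the candidate point, transferring the inequality between $\Omega$ and $C(x)$ via the identity $(DU)^{T}(DU)=\alpha^{2}I$ coming from $D=\alpha I$ and the unitarity of $U$ (you run the computation from $C(x)$ back to $\Omega$, the paper runs it forward from $\Omega$ to $C(x)$, but these are the same argument). Your explicit check of membership $p\in C(x)$ is a small point the paper leaves implicit, and your remark on where scalarity of $D$ is needed is accurate.
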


\begin{proof}
Let $z\in\mathbb{R}^{n}$ and denote%
\begin{equation}
y:=\alpha UP_{\Omega}\left(  \frac{1}{\alpha}U^{T}\left(  z-Ax\right)
\right)  +Ax. \label{eq:y}%
\end{equation}

We show that%
\begin{equation}
y=P_{C(x)}\left(  z\right)  .
\end{equation}

From (\ref{eq:y}) and the unitary matrix $U$ we deduce%
\begin{equation}
\frac{1}{\alpha}U^{T}\left(  y-Ax\right)  =P_{\Omega}\left(  \frac{1}{\alpha
}U^{T}\left(  z-Ax\right)  \right)  .
\end{equation}

By the characterization of the metric projection onto $\Omega$
(\ref{eq:ProjP1}) we have%

\begin{equation}
\left\langle \frac{1}{\alpha}U^{T}\left(  z-Ax\right)  -\frac{1}{\alpha}%
U^{T}\left(  y-Ax\right)  ,w-\frac{1}{\alpha}U^{T}\left(  y-Ax\right)
\right\rangle \leq0,\text{ for all }w\in\Omega.
\end{equation}

Since $\alpha>0$ and $U$ is unitary, we get%

\begin{equation}
\left\langle z-y,\alpha Uw+Ax-y\right\rangle \leq0,\text{ for all }w\in\Omega.
\end{equation}

Denoting $v:=\alpha Uw+Ax$, since $w\in\Omega$ we get $v\in\alpha U\left(
\Omega\right)  +Ax=C(x)$,

the
\begin{equation}
\left\langle z-y,v-y\right\rangle \leq0,\text{ for all }v\in C(x),
\end{equation}

and again by the characterization of the metric projection onto
$C(x)$ (\ref{eq:ProjP1}) and by (\ref{eq:y}) $y=P_{C(x)}\left(
z\right) $ which completes the proof.
\end{proof}

\begin{figure}
[H]
\begin{center}
\includegraphics[
width=3in
]%
{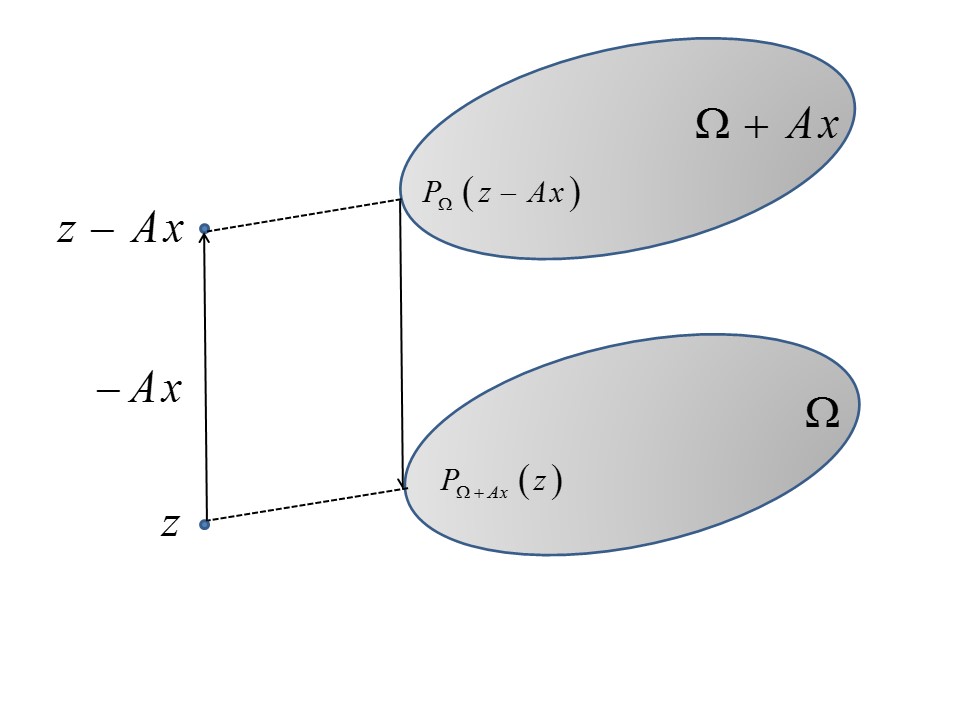}%
\caption{Illustration of Lemma \ref{Lemma:1} with $D,U=I$}%
\label{Fig:1a}%
\end{center}
\end{figure}
\begin{figure}
[H]
\begin{center}
\includegraphics[
width=3in
]%
{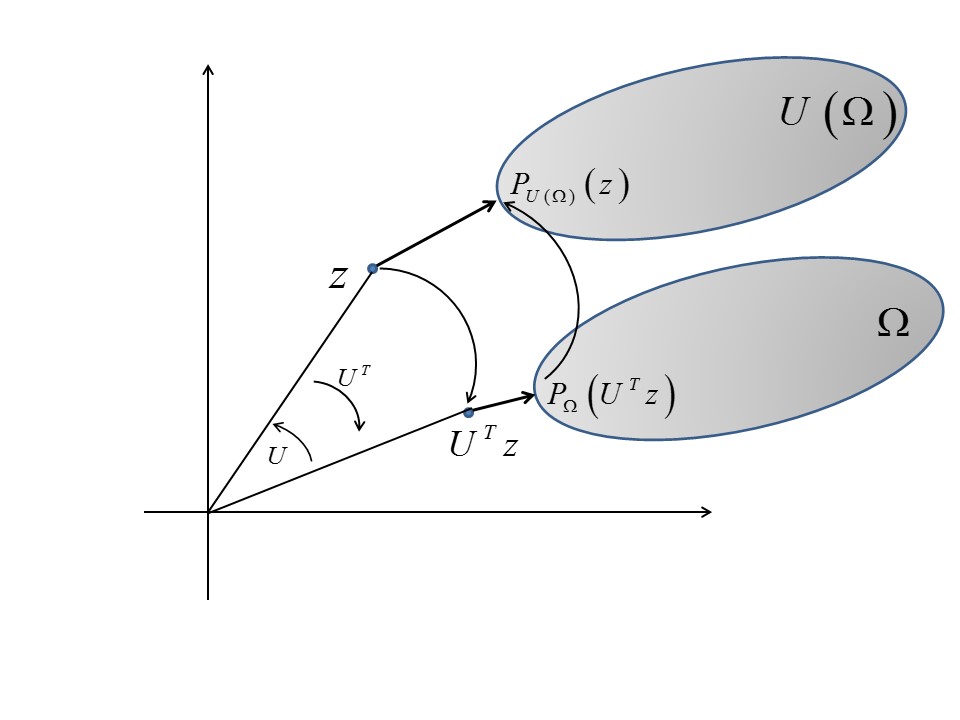}%
\caption{Illustration of Lemma \ref{Lemma:1} with $D=I$ and $A=0$}%
\label{Fig:2}%
\end{center}
\end{figure}

Two special cases of Lemma \ref{Lemma:1} are illustrated. In
Figure \ref{Fig:1a} we use $D,U=I$ so that $P_{\Omega+Ax}\left(
z\right) =P_{\Omega}\left(  z-Ax\right) +Ax$, meaning that the set
$\Omega$ is shifted by the point $Ax$ to $\Omega+Ax$. In Figure
\ref{Fig:2} we use $D=I$ and $A=0$ so that
$P_{U\left(\Omega\right)} \left( z\right)=UP_{\Omega }\left(
U^{T}z\right)$, meaning that the set $\Omega$ is rotated by the
unitary matrix $U$ to $U\left( \Omega\right)$.

\newpage
\section{The algorithms\label{sec:Algs}}

The following definitions will be used.

\begin{definition}
A sequence $\{\sigma_{k}\}_{k\in%
\mathbb{N}
}$ of real positive numbers is called a \texttt{steering sequence} if it
satisfies all the following conditions:\medskip\ (i) $\lim_{k\rightarrow
\infty}\sigma_{k}=0;$ (ii) $\lim_{k\rightarrow\infty}\frac{\displaystyle\sigma_{k+1}%
}{\displaystyle\sigma_{k}}=1;$ and (iii) $\sum_{k=0}^{\infty}\sigma_{k}=+\infty.$ Let
$\beta$ be a positive integer. If condition (ii) is replaced by%
\begin{equation}
\lim_{k\rightarrow\infty}\frac{\sigma_{k\beta+j}}{\sigma_{k\beta}%
}=1,\ \text{for all }1\leq j\leq\beta-1,
\end{equation}
and \ (i) and (iii) remain unchanged then the sequence is called an $\beta
$\texttt{-steering sequence.}
\end{definition}

\begin{definition}
A sequence $\{i(k)\}_{k\in%
\mathbb{N}
}$ of indices is called a \texttt{cyclic control sequence} over the index set
$\{1,2,\ldots,S\}$ if%
\begin{equation}
i(k)=k\operatorname{mod}S,\text{ \ for }k\geq0.
\end{equation}

\end{definition}

\begin{problem}
\label{P:ICFP}\textbf{The Implicit Convex Feasibility Problem}. Given
set-valued mappings $C_{s}:%
\mathbb{R}
^{n}\rightarrow2^{%
\mathbb{R}
^{n}}$, $s=1,2,\ldots,{S,}$ with closed convex values $C_{s}(x)$, the
\textit{Implicit Convex Feasibility Problem} (ICFP) is%
\begin{equation}
\text{Find a point }x^{\ast}\in\cap_{s=1}^{S}C_{s}(x^{\ast}).
\end{equation}

\end{problem}

One way of handling this problem is to reformulate it as the unconstrained
minimization%
\begin{equation}
\left\{
\begin{array}
[c]{ll}%
\mathrm{minimize} & G_{icfp}(x)\\
\mathrm{subject~to} & x\in\mathbb{R}^{n},
\end{array}
\right.  \label{p:unconstraint-minimization}%
\end{equation}
where%
\begin{equation}
G_{icfp}(x):=\frac{1}{2}\sum_{s=1}^{S}\Vert x-P_{C_{s}(x)}(x)\Vert^{2}
\label{func:G_icfp}%
\end{equation}
in which $P_{C_{s}(x)}$ is the metric projection operator onto the sets
$C_{s}(x)$.

Following the works of Censor et al. \cite{cdz04} and Gholami et al.
\cite{gtsc13} we present two algorithmic schemes, simultaneous and sequential,
for solving the ICFP of Problem \ref{P:ICFP}. For $s=1,2,\ldots,{S,}$ let
$\Omega_{s}$ be nonempty, closed and convex core sets in $\mathbb{R}^{n}$,
$A_{s}\in\mathbb{R}^{n\times n}$ are matrices$,$ $U_{s}\in\mathbb{R}^{n\times
n}$ are unitary matrices, and $\alpha_{s}>0$. Then the variable sets defined
in Definition \ref{ex:1} take the form%
\begin{equation}
C_{s}(x)=\alpha_{s}U_{s}\left(  \Omega_{s}\right)  +A_{s}x, \label{eq:Cs}%
\end{equation}
and we assume that the projection $P_{\Omega_{s}}$ are at hand or can be
easily calculated.

\begin{algorithm}
\label{alg:GC-sim}\textbf{The Simultaneous Algorithm}$\left.  {}\right.  $

\textbf{Preliminary calculations}: For $s=1,2,\ldots,{S,}$ calculate the
matrices%
\begin{equation}
K_{s}:=\frac{1}{\alpha_{s}}U_{s}^{T}\left(  I-A_{s}\right)  \label{eq:Ks}%
\end{equation}
and use matrix $2$-norms to calculate the constant%
\begin{equation}
L_{icfp}=\sum_{s=1}^{S}\left\Vert I-A_{s}\right\Vert _{2}^{2}.
\end{equation}

{\textbf{Initialization:}} Select an arbitrary starting point $x^{0}%
\in\mathbb{R}^{n}$ and set $k=0$.\newline

\textbf{Iterative step:} Given the current iterate $x^{k}$, calculate the next
iterate by%
\begin{equation}
x^{k+1}=x^{k}-\gamma_{k}\sum_{s=1}^{S}\alpha_{s}^{2}K_{s}^{T}\left(
I-P_{\Omega_{s}}\right)  \left(  K_{s}x^{k}\right)  , \label{eq:alg1:update_x}%
\end{equation}
where $\gamma_{k}\in(0,2/L_{icfp})$ for all $k\geq0.$\newline

\textbf{Stopping rule:} If\textbf{\ }$x^{k+1}=x^{k}$ (or, alternatively, if
$\Vert x^{k+1}-x^{k}\Vert$ is small enough) then stop. Otherwise, set
$k\leftarrow(k+1)$ and go back to the beginning of the iterative
step\textbf{.}
\end{algorithm}

\begin{algorithm}
\label{alg:GC-seq}\textbf{The Sequential Algorithm}$\left.  {}\right.  $

\textbf{Preliminary calculation}: For $s=1,2,\ldots,{S,}$ calculate the
matrices%
\begin{equation}
K_{s}:=\frac{1}{\alpha_{s}}U_{s}^{T}\left(  I-A_{s}\right)  .
\end{equation}

{\textbf{Initialization:}} Select an arbitrary starting point $x^{0}%
\in\mathbb{R}^{n}$ and set $k=0$.\newline

\textbf{Iterative step:} Given the current iterate $x^{k}$, calculate the next
iterate by%
\begin{equation}
x^{k+1}=x^{k}-\sigma_{k}\alpha_{i(k)}^{2}K_{i(k)}^{T}\left(  I-P_{\Omega
_{i(k)}}\right)  \left(  K_{i(k)}x^{k}\right)  ,
\end{equation}
where $\{\sigma_{k}\}_{k\in%
\mathbb{N}
}$ and $\{i(k)\}_{k\in%
\mathbb{N}
}$ are a $\beta$-steering and a cyclic control sequences,
respectively.\newline

\textbf{Stopping rule: }If\textbf{\ }$x^{k+1}=x^{k}$ (or, alternatively, if
$\Vert x^{k+1}-x^{k}\Vert$ is small enough) then stop. Otherwise, set
$k\leftarrow(k+1)$ and go back to the beginning of the iterative
step\textbf{.}
\end{algorithm}

\subsection{Convergence}

For the mappings $C_{s}(\cdot)$ of (\ref{eq:Cs}) we get, from Lemma
\ref{Lemma:1}, a simplified form of the proximity function $G_{icfp}$ in
(\ref{func:G_icfp}),%

\begin{align}
G_{icfp}(x) &  =\frac{1}{2}\sum_{s=1}^{S}\left\Vert x-\left(  \alpha_{s}%
U_{s}P_{\Omega_{s}}\left(  \frac{1}{\alpha_{s}}U_{s}^{T}\left(  x-A_{s}%
x\right)  \right)  +A_{s}x\right)  \right\Vert ^{2}\nonumber\\
&  =\frac{1}{2}\sum_{s=1}^{S}\left\Vert \alpha_{s}U_{s}\left(  I-P_{\Omega
_{s}}\right)  \left(  \frac{1}{\alpha_{s}}U_{s}^{T}\left(  I-A_{s}\right)
x\right)  \right\Vert ^{2}\nonumber\\
&  =\frac{1}{2}\sum_{s=1}^{S}\alpha_{s}^{2}\left\Vert \left(  I-P_{\Omega_{s}%
}\right)  \left(  \frac{1}{\alpha_{s}}U_{s}^{T}\left(  I-A_{s}\right)
x\right)  \right\Vert ^{2}\nonumber\\
&  =\frac{1}{2}\sum_{s=1}^{S}\alpha_{s}^{2}\left\Vert \left(  I-P_{\Omega_{s}%
}\right)  \left(  K_{s}x\right)  \right\Vert ^{2},\label{eq:Func_icfp}%
\end{align}
where $K_{s}$ is as in (\ref{eq:Ks}). \bigskip

In order to prove convergence of Algorithm \ref{alg:GC-sim} we need to show
that the function $G_{icfp}$ is convex, continuously differentiable and that
its gradient is Lipschitz continuous (see \cite[Proposition 4]{gtsc13}). For
the convergence of Algorithm \ref{alg:GC-seq} it is sufficient to show only
convexity and continuous differentiability of $G_{icfp}$, see \cite[Theorem
6]{cdz04}. In both cases our analysis relies on the classical theorems of
Baillon and Haddad \cite{bh77} and of Dolidze \cite{Dolidze84}.

\begin{proposition}
\label{prop1} The function $G_{icfp}$ of (\ref{eq:Func_icfp}) is (1) convex,
(2) continuously differentiable, and (3) its gradient is Lipschitz continuous.
\end{proposition}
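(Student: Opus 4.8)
The plan is to reduce everything to a single classical building block and then assemble the three claims by operations that preserve them. First I would observe that for each $s$ the inner expression is a squared distance function: since $\Vert(I-P_{\Omega_s})(y)\Vert = \Vert y - P_{\Omega_s}(y)\Vert$ is precisely the distance of $y$ to $\Omega_s$, the map $\phi_s(y) := \frac{1}{2}\Vert(I-P_{\Omega_s})(y)\Vert^2$ is one half of the squared distance to the nonempty closed convex set $\Omega_s$. The classical fact I would invoke (see Cegielski \cite{Ceg12}, and underlying it the firm nonexpansiveness of the metric projection together with the theorem of Baillon and Haddad \cite{bh77}) is that $\phi_s$ is convex, continuously differentiable with $\nabla \phi_s = I - P_{\Omega_s}$, and that this gradient is firmly nonexpansive, hence $1$-Lipschitz, because $P_{\Omega_s}$ is firmly nonexpansive.

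Granting this, each summand of $G_{icfp}$ is $\alpha_s^2\, \phi_s(K_s x)$, i.e.\ a positive multiple of the convex function $\phi_s$ precomposed with the linear map $K_s$. Convexity (claim 1) then follows since the composition of a convex function with a linear map is convex and a nonnegative combination of convex functions is convex. For claim 2, the chain rule gives $\nabla G_{icfp}(x) = \sum_{s=1}^S \alpha_s^2\, K_s^T (I - P_{\Omega_s})(K_s x)$, which is continuous because each $P_{\Omega_s}$ is continuous and each $K_s$ is linear; I would also point out that this is exactly the gradient driving the update in Algorithm \ref{alg:GC-sim}, which serves as a consistency check.

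For claim 3, I would estimate $\nabla G_{icfp}(x) - \nabla G_{icfp}(x')$ term by term, using submultiplicativity of the operator norm and the $1$-Lipschitz property of $I - P_{\Omega_s}$, to obtain the bound $\sum_s \alpha_s^2 \Vert K_s\Vert_2^2 \,\Vert x - x'\Vert$. The step that pins down the constant uses unitary invariance of the $2$-norm: since $U_s$ is unitary, $\Vert K_s\Vert_2 = \frac{1}{\alpha_s}\Vert U_s^T(I - A_s)\Vert_2 = \frac{1}{\alpha_s}\Vert I - A_s\Vert_2$, so $\alpha_s^2\Vert K_s\Vert_2^2 = \Vert I - A_s\Vert_2^2$ and the Lipschitz constant is exactly $L_{icfp} = \sum_{s=1}^S \Vert I - A_s\Vert_2^2$ as defined in the algorithm. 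The only genuinely nonroutine ingredient is the building-block result of the first paragraph — in particular the identity $\nabla\bigl(\tfrac{1}{2}d(\cdot,\Omega_s)^2\bigr) = I - P_{\Omega_s}$ together with the firm nonexpansiveness of the projection; everything afterward is the chain rule, convexity calculus, and routine norm bookkeeping.
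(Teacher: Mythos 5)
Your proposal is correct and follows essentially the same route as the paper: both reduce each summand to the half-squared-distance proximity function precomposed with the linear map $K_s$, obtain the gradient $\sum_{s=1}^{S}\alpha_s^2 K_s^T(I-P_{\Omega_s})(K_s x)$, and extract the Lipschitz constant $L_{icfp}=\sum_{s=1}^{S}\Vert I-A_s\Vert_2^2$ from the firm nonexpansiveness of the projection together with the unitarity of $U_s$. The only cosmetic difference is that you spell out the classical building block $\nabla\bigl(\tfrac{1}{2}d(\cdot,\Omega_s)^2\bigr)=I-P_{\Omega_s}$ explicitly, whereas the paper cites the known gradient of $G_{scfp}$ and the convexity lemma of Masad and Reich, and the paper routes the norm computation through $\Vert K_s^T K_s\Vert_2$ rather than through unitary invariance of $\Vert K_s\Vert_2$ directly.
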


\begin{proof}
Recall that the SCFP (\ref{P:SCFP}) can also be formulated as the minimization
problem%
\begin{equation}
\left\{
\begin{array}
[c]{ll}%
\mathrm{minimize} & G_{scfp}(x):=\frac{1}{2}\left\Vert Tx-P_{Q}\left(
Tx\right)  \right\Vert ^{2}\\
\mathrm{subject~to} & x\in C,
\end{array}
\right.
\end{equation}
see, e.g., \cite{byrne04}, and, moreover, for the CMSSCFP (\ref{eq:CMSSCFP-2})
we have%
\begin{equation}
\left\{
\begin{array}
[c]{ll}%
\mathrm{minimize} & G_{cmsscfp}(x):=\frac{1}{2}\sum_{s=1}^{S}\left\Vert
x-P_{\Omega_{s}}\left(  x\right)  \right\Vert ^{2}+\frac{1}{2}\sum_{r=1}%
^{R}\left\Vert T_{r}x-P_{Q_{r}}\left(  T_{r}x\right)  \right\Vert ^{2}\\
\mathrm{subject~to} & x\in\Gamma,
\end{array}
\right.
\end{equation}
see \cite{mr07}. Since our proximity function $G_{icfp}$ (\ref{eq:Func_icfp})
shares some common features with the above $G_{scfp}$ and $G_{cmsscfp}$
functions, we follow the lines of \cite{mr07} and \cite[Theorem 2]{gtsc13} to
prove the proposition.\medskip

(1) The convexity of $G_{icfp}$ (\ref{eq:Func_icfp}) is obvious, see, for
example \cite[Lemma 2]{mr07}.\medskip

(2) Since%
\begin{equation}
\nabla G_{scfp}(x)=T^{T}\left(  I-P_{Q}\right)  \left(  Tx\right)
\end{equation}
($T^{T}$ is the transpose of $T$) we deduce that%
\begin{equation}
\nabla G_{icfp}(x)=\sum_{s=1}^{S}\alpha_{s}^{2}K_{s}^{T}\left(  I-P_{\Omega
_{s}}\right)  \left(  K_{s}x\right)  ,
\end{equation}
where $K_{s}^{T}=\left(  1/\alpha_{s}\right)  \left(  I-A_{s}^{T}\right)
U_{s},$ and continuous differentiability follows.\medskip\

(3) To show that $\nabla G_{icfp}$ is Lipschitz continuous, that is%
\begin{equation}
\left\Vert \nabla G_{icfp}(x)-\nabla G_{icfp}(y)\right\Vert \leq
L_{icfp}\left\Vert x-y\right\Vert ,\text{ for all }x,y\in%
\mathbb{R}
^{n},
\end{equation}
we write%
\begin{align}
\nabla G_{icfp}(x)-\nabla G_{icfp}(y)  &  =\sum_{s=1}^{S}\alpha_{s}^{2}%
K_{s}^{T}\left(  I-P_{\Omega_{s}}\right)  \left(  K_{s}x\right)  -\sum
_{s=1}^{S}\alpha_{s}^{2}K_{s}^{T}\left(  I-P_{\Omega_{s}}\right)  \left(
K_{s}y\right) \nonumber\\
&  =\sum_{s=1}^{S}\alpha_{s}^{2}K_{s}^{T}\left(  I-P_{\Omega_{s}}\right)
\left(  K_{s}x-K_{s}y\right)  .
\end{align}
The firm-nonexpansivity of the projection operator, see, e.g., \cite[Definion
2.2.1]{Ceg12} along with the triangle and the Cauchy--Schwarz inequalities
imply%
\begin{align}
\left\Vert \nabla G_{icfp}(x)-\nabla G_{icfp}(y)\right\Vert  &  =\left\Vert
\sum_{s=1}^{S}\alpha_{s}^{2}K_{s}^{T}\left(  I-P_{\Omega_{s}}\right)  \left(
K_{s}x-K_{s}y\right)  \right\Vert \nonumber\\
&  \leq\sum_{s=1}^{S}\alpha_{s}^{2}\left\Vert K_{s}^{T}\right\Vert
_{2}\left\Vert K_{s}\right\Vert _{2}\left\Vert x-y\right\Vert \nonumber\\
&  =\sum_{s=1}^{S}\alpha_{s}^{2}\left\Vert K_{s}^{T}K_{s}\right\Vert
_{2}\left\Vert x-y\right\Vert .
\end{align}
Calculating%
\begin{align}
K_{s}^{T}K_{s}  &  =\frac{1}{\alpha_{s}^{2}}\left(  I-A_{s}^{T}\right)
U_{s}U_{s}^{T}\left(  I-A_{s}\right) \nonumber\\
&  =\frac{1}{\alpha_{s}^{2}}\left(  I-A_{s}^{T}\right)  \left(  I-A_{s}%
\right)  =\frac{1}{\alpha_{s}^{2}}\left(  I-A_{s}\right)  ^{T}\left(
I-A_{s}\right)
\end{align}
we obtain%
\begin{equation}
\left\Vert K_{s}^{T}K_{s}\right\Vert _{2}=\left(  \frac{1}{\alpha_{s}%
}\left\Vert I-A_{s}\right\Vert _{2}\right)  ^{2},
\end{equation}
meaning that%
\begin{equation}
\left\Vert \nabla G_{icfp}(x)-\nabla G_{icfp}(y)\right\Vert \leq\left(
\sum_{s=1}^{S}\left\Vert I-A_{s}\right\Vert _{2}^{2}\right)  \left\Vert
x-y\right\Vert,
\end{equation}
so that $\nabla G_{icfp}$ is Lipschitz continuous with the Lipschitz constant
$L_{icfp}=\sum_{s=1}^{S}\left\Vert I-A_{s}\right\Vert _{2}^{2}$.
\end{proof}

\begin{theorem}
For $s=1,2,\ldots,{S,}$ let $\Omega_{s}$ be nonempty, closed and convex core
sets in $\mathbb{R}^{n}$, $A_{s}\in\mathbb{R}^{n\times n}$ are matrices$,$
$U_{s}\in\mathbb{R}^{n\times n}$ are unitary matrices, and $\alpha_{s}>0$. If
the solution set of the ICFP of Problem \ref{P:ICFP} is nonempty then any
sequence $\left\{  x^{k}\right\}  _{k=0}^{\infty},$ generated by Algorithm
\ref{alg:GC-sim}$,$ converges to a solution $x^{\ast}$ of (\ref{P:ICFP}).
\end{theorem}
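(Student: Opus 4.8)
The plan is to recognize Algorithm \ref{alg:GC-sim} as a variable-step gradient descent iteration applied to the proximity function $G_{icfp}$ and then to invoke the classical convergence theory for such iterations, feeding it the three properties already established in Proposition \ref{prop1}. First I would rewrite the iterative step (\ref{eq:alg1:update_x}) in the compact form
\begin{equation}
x^{k+1}=x^{k}-\gamma_{k}\nabla G_{icfp}(x^{k}),
\end{equation}
using the expression for $\nabla G_{icfp}$ computed inside the proof of Proposition \ref{prop1}. This identifies the algorithm exactly as a gradient method for the unconstrained problem (\ref{p:unconstraint-minimization}) with step sizes $\gamma_{k}\in(0,2/L_{icfp})$.

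Second, I would identify the solution set of the ICFP with the set of zeros of $\nabla G_{icfp}$. Since $G_{icfp}\geq 0$ everywhere and, by the nonemptiness hypothesis, there is an $x^{\ast}$ with $x^{\ast}\in\cap_{s=1}^{S}C_{s}(x^{\ast})$, we get $P_{C_{s}(x^{\ast})}(x^{\ast})=x^{\ast}$ for every $s$, hence $G_{icfp}(x^{\ast})=0$; this makes $0$ the global minimum value. Conversely $G_{icfp}(x)=0$ forces every summand in (\ref{eq:Func_icfp}) to vanish, which by Lemma \ref{Lemma:1} gives $x\in C_{s}(x)$ for all $s$. Thus the zero level set of $G_{icfp}$, its global minimizer set, and the ICFP solution set all coincide, and because $G_{icfp}$ is convex and differentiable these equal the set of zeros of $\nabla G_{icfp}$, which is therefore nonempty.

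Third, and this is the analytic heart, I would apply the Baillon--Haddad theorem \cite{bh77}: since $G_{icfp}$ is convex, continuously differentiable, and has $L_{icfp}$-Lipschitz gradient (all from Proposition \ref{prop1}), the operator $\nabla G_{icfp}$ is $(1/L_{icfp})$-inverse strongly monotone, i.e.
\begin{equation}
\langle \nabla G_{icfp}(x)-\nabla G_{icfp}(y),\,x-y\rangle \geq \frac{1}{L_{icfp}}\left\Vert \nabla G_{icfp}(x)-\nabla G_{icfp}(y)\right\Vert^{2}.
\end{equation}
This cocoercivity, combined with the step-size window $\gamma_{k}\in(0,2/L_{icfp})$ and the nonemptiness of the zero set established above, is precisely the hypothesis under which Dolidze's theorem \cite{Dolidze84} yields convergence of the iterates $\{x^{k}\}$ to a zero of $\nabla G_{icfp}$. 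By the identification in the second step that limit point $x^{\ast}$ solves the ICFP. This is exactly the route taken in \cite[Proposition 4]{gtsc13}, which I would cite to close the argument.

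I expect the only delicate point to be in the third step: making the admissible step-size window of Dolidze's theorem line up with $(0,2/L_{icfp})$ and checking that its convergence hypotheses are genuinely met rather than merely $\gamma_{k}\in(0,2/L_{icfp})$ pointwise — typically one wants $\liminf_{k}\gamma_{k}>0$ and $\limsup_{k}\gamma_{k}<2/L_{icfp}$ so that the averaged map $I-\gamma_{k}\nabla G_{icfp}$ stays uniformly nonexpansive. Once Proposition \ref{prop1} and the equivalence of solution sets are in hand, everything else is bookkeeping.
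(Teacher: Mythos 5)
Your proposal follows essentially the same route as the paper, which likewise identifies Algorithm \ref{alg:GC-sim} as gradient descent on $G_{icfp}$, invokes Proposition \ref{prop1} for convexity, differentiability and the Lipschitz gradient, and then defers to the classical theory (the paper cites \cite[Proposition 2.3.2]{Bertsekas95} and, in its preamble, Baillon--Haddad and Dolidze, exactly the results you use); your added steps --- identifying the ICFP solution set with the zero set of $\nabla G_{icfp}$ via $G_{icfp}(x)=0$, and deriving cocoercivity --- are correct and merely make explicit what the paper leaves to the citation. Your closing caveat about needing $\gamma_{k}$ bounded away from $0$ and from $2/L_{icfp}$ (rather than merely lying pointwise in the open interval) is a genuine subtlety that the paper's statement glosses over, and flagging it is to your credit.
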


\begin{proof}
Proposition \ref{prop1}, guarantees that $G_{icfp}$ is convex, continuously
differentiable, and its gradient is Lipschitz continuous, therefore Algorithm
\ref{alg:GC-sim} is a gradient descent method for the unconstrained
minimization problem (\ref{p:unconstraint-minimization}) which solves the ICFP
(\ref{P:ICFP}). For the complete proof see, e.g., \cite[Proposition
2.3.2]{Bertsekas95}.
\end{proof}

\begin{theorem}
For $s=1,2,\ldots,{S,}$ let $\Omega_{s}$ be nonempty, closed and convex core
sets in $\mathbb{R}^{n}$, $A_{s}\in\mathbb{R}^{n\times n}$ are matrices$,$
$U_{s}\in\mathbb{R}^{n\times n}$ are unitary matrices, and $\alpha_{s}>0$. If
the solution set of the ICFP of Problem \ref{P:ICFP} is nonempty and any
sequence $\left\{  x^{k}\right\}  _{k=0}^{\infty}$ generated by Algorithm
\ref{alg:GC-seq} is bounded then $\left\{  x^{k}\right\}  _{k=0}^{\infty}%
$\textit{\ }converges to a solution $x^{\ast}$ of (\ref{P:ICFP}).
\end{theorem}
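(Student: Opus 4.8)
The plan is to recognize Algorithm \ref{alg:GC-seq} as a cyclic, incremental gradient method for the unconstrained minimization problem (\ref{p:unconstraint-minimization}) and then to invoke the convergence result \cite[Theorem 6]{cdz04} after verifying its hypotheses. First I would decompose $G_{icfp}=\sum_{s=1}^{S}G_{s}$, where $G_{s}(x):=\frac{1}{2}\alpha_{s}^{2}\Vert(I-P_{\Omega_{s}})(K_{s}x)\Vert^{2}$. Repeating the argument of Proposition \ref{prop1} for a single summand shows that each $G_{s}$ is convex and continuously differentiable with $\nabla G_{s}(x)=\alpha_{s}^{2}K_{s}^{T}(I-P_{\Omega_{s}})(K_{s}x)$, and that $\nabla G_{s}$ is Lipschitz continuous with constant $L_{s}=\Vert I-A_{s}\Vert_{2}^{2}$. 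With this notation the iterative step of Algorithm \ref{alg:GC-seq} is exactly $x^{k+1}=x^{k}-\sigma_{k}\nabla G_{i(k)}(x^{k})$, i.e.\ a partial gradient step using only the $i(k)$-th summand.

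Next I would identify the target set. Since $G_{icfp}\geq0$ and each $G_{s}\geq0$, a point $x^{\ast}$ solves the ICFP of Problem \ref{P:ICFP} if and only if $G_{icfp}(x^{\ast})=0$, equivalently $x^{\ast}$ is a common zero of all the gradients $\nabla G_{s}$; by hypothesis this set is nonempty. The key analytic input is the theorem of Baillon and Haddad \cite{bh77}: because each $G_{s}$ is convex and differentiable with $L_{s}$-Lipschitz gradient, $\nabla G_{s}$ is $(1/L_{s})$-cocoercive, that is $\langle\nabla G_{s}(x)-\nabla G_{s}(y),x-y\rangle\geq(1/L_{s})\Vert\nabla G_{s}(x)-\nabla G_{s}(y)\Vert^{2}$ for all $x,y\in\mathbb{R}^{n}$, so that $\tfrac{1}{L_{s}}\nabla G_{s}$ is firmly nonexpansive. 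This property is precisely what feeds the convergence machinery of Dolidze \cite{Dolidze84} and its cyclic, steering-sequence version in \cite[Theorem 6]{cdz04}. Taking $\beta=S$, so that one full cycle of the control sequence $i(k)=k\operatorname{mod}S$ matches the block length of the $\beta$-steering sequence, and using the assumed boundedness of $\{x^{k}\}$, I would then apply \cite[Theorem 6]{cdz04} to conclude that $\{x^{k}\}$ converges to a common minimizer of the $G_{s}$, hence to a solution $x^{\ast}$ of (\ref{P:ICFP}).

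The hard part will be the passage over a full cycle that is abstracted in \cite[Theorem 6]{cdz04}. Unlike the simultaneous scheme, which is a genuine gradient descent on $G_{icfp}$ with a constant-range step and therefore decreases the objective at every iteration, here a single step uses only one summand and need not decrease $G_{icfp}$. The role of the $\beta$-steering conditions $\lim_{k\to\infty}\sigma_{k\beta+j}/\sigma_{k\beta}=1$ is exactly to make a block of $S$ consecutive steps behave asymptotically like one gradient step on the full sum, so that a telescoping Fej\'er-type estimate of the form $\Vert x^{k+1}-x^{\ast}\Vert^{2}\leq\Vert x^{k}-x^{\ast}\Vert^{2}+o(\sigma_{k})$ accumulates correctly across cycles. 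Combining $\sigma_{k}\to0$, $\sum_{k}\sigma_{k}=+\infty$, the cocoercivity estimates, and boundedness forces $\nabla G_{icfp}(x^{k})\to0$ along the sequence and upgrades subsequential convergence to convergence of the whole sequence; the main obstacle is checking that our component gradients meet the structural assumptions of that theorem, which is secured by the cocoercivity supplied by Baillon--Haddad together with the Lipschitz constants $L_{s}$ computed above.
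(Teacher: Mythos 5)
Your proposal follows essentially the same route as the paper: decompose $G_{icfp}$ into the summands $g_{s}(x)=\alpha_{s}^{2}\Vert(I-P_{\Omega_{s}})(K_{s}x)\Vert^{2}$, verify via Proposition \ref{prop1} that each is convex and continuously differentiable, and recognize Algorithm \ref{alg:GC-seq} as an instance of \cite[Algorithm 5]{cdz04} so that \cite[Theorem 6]{cdz04} yields convergence under the boundedness assumption. Your additional discussion of Baillon--Haddad cocoercivity and the role of the $\beta$-steering conditions is a more detailed verification of that theorem's hypotheses, but it is the same argument.
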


\begin{proof}
The function $G_{icfp}$ can be written as%
\begin{equation}
G_{icfp}(x)=\frac{1}{2}\sum_{s=1}^{S}g_{s}(x)
\end{equation}
where $g_{s}(x):=\alpha_{s}^{2}\left\Vert \left(  I-P_{\Omega_{s}}\right)
\left(  K_{s}x\right)  \right\Vert ^{2},$ for all $s=1,2,\ldots,{S}$. By
Proposition \ref{prop1} each $g_{s}$ is convex and continuously
differentiable, therefore Algorithm \ref{alg:GC-seq} is a special case of
\cite[Algorithm 5]{cdz04} and its convergence is guaranteed by \cite[Theorem
6]{cdz04}.
\end{proof}

\begin{remark}
1. Observe that the step size $\gamma_{k}$ in the simultaneous
projection algorithm \ref{alg:GC-sim} is chosen in the interval
$(0,2/L_{icfp})$ which requires the knowledge of the matrix
$2$-norm. As remarked by one of the referees this kind of step
size might be inefficient from the numerical point of view.
Several alternative step size strategies appear in \cite {sci15, lmwx12} and the references therein. \\

2. The ICFP of Problem \ref{P:ICFP} can be reformulated as an
unconstrained minimization so that by applying first-order methods
we get two different schemes that generate sequences that converge
to a solution of the ICFP of Problem \ref{P:ICFP}. An alternative
additional approach is to use the first-order optimality condition
in order to reformulate the ICFP of Problem \ref{P:ICFP} as a
variational inequality problem and derive other appropriate
algorithmic schemes, such as, Korpelevich's extragradient method
\cite{Korpelevich76}.
\end{remark}

\section{Application\label{sec:Special}}

\subsection{Model description}

In the following we introduce an approach for image denoising,
which is described in terms of an implicit convex feasibility
problem. We provide it as a specific instance of an ICFP rather
than as a method of choice for image denoising. Evaluating its
practical advantages for image denoising is a direction for future
work.

Various methods have been proposed in the literature for image
denoising. These methods can roughly be divided into methods based
on partial differential equations like the edge-preserving
Perona-Malik \cite{Catte1992} model and Weickert's anisotropic
diffusion \cite{Weickert1998}, Wavelet based methods
\cite{Chang2000}, non-local iterative filtering \cite{Buades2005},
collaborative filtering such as BM3D \cite{Dabov2007} and
variational approaches \cite{Scherzer2009variational}. Among the
latter we find methods based on regularization with the total
variation (TV) semi-norm \cite{Rudin1992} and higher order
expressions \cite{Bredies2010,Setzer2011}, which became popular
and widely used. Recent trends include also adaptive
\cite{Astroem2015,Estellers2015,Lefkimmiatis2015,Lenzen2015} and
non-local \cite{Kindermann2005} TV methods.

Below, we discuss an ansatz based on only prescribing constraints
for the pixel intensities, which leads to an ICFP. Since our
ansatz with fixed constraints (CFP) can be interpreted as a
constrained optimization problem with constant objective function,
it is related to the variational approaches. Allowing the
constraints to vary depending on the solution of the problem, such
as the ICFP allows, introduces adaptivity.

We now turn to the description of the proposed ICFP. For simplicity, we restrict ourselves to gray value images. We represent the
image to be denoised as a matrix ${Y=(y}_{i,j})\in\mathbb{R}^{{n_{1}}%
\times{n_{2}}}$, where ${n_{1}}$ and ${n_{2}}$ are the width and
height of the image. The noisy data $Y$ are obtained from an
unknown noise-free image represented by
${X^*}=(x^*_{i,j})\in\mathbb{R}^{{n_{1}}\times{n_{2}}}$ through
the relationship
\begin{equation}
y_{i,j}=x^*_{i,j} + \eta_{i,j},
\end{equation}
where $\eta_{i,j}\in\mathbb{R}^{{n_{1}}\times{n_{2}}}$ are
realizations of independent and identically distributed Gaussian
random variables with zero mean. We denote the denoised data $Y$
by $X{=(x}_{i,j})\in\mathbb{R}^{{n_{1}}\times{n_{2}}}$, which
forms our estimate of $X^{\ast}$.

For each pixel $(i,j)$ we will impose $S$ constraints on the gray level
intensity $x_{i,j}$ in terms of sets $\Omega_{s}^{i,j}$, $s=1,2,\dots,S$. Note
that we index an individual constraint set for an image location $(i,j)$ by a
subscript $s$, while the superscripts refer to the location. We motivate a
suitable choice for these sets as follows. Let us consider a fixed pixel
$(i,j)$ in the interior of the image together with its left and right
neighbors ${y}_{i-1,j}$ and ${y}_{i+1,j}$. In absence of noise, if the image
intensities vary smoothly, we can assume that ${y}_{i,j}$ is near the linear
interpolation of these two values, while in the case of strong noise
${y}_{i,j}$ likely lies outside the range determined by ${y}_{i-1,j}$ and
${y}_{i+1,j}$. Therefore, it makes sense to impose the constraint%
\begin{equation}\label{eq:denoising:horizontal_constraint}
x_{i,j}\in\Omega_{1}^{i,j}:=[\min({y}_{i+1,j},{y}_{i-1,j}),\max({y}%
_{i+1,j},{y}_{i-1,j})]
\end{equation}
for the smoothed image $X,$ where $[a,b]$ denotes the closed
interval between $a$ and $b$. To also cover the case of boundary
pixels, we assume a constant extension of the image outside the
image domain, so that \eqref{eq:denoising:horizontal_constraint}
is well-defined for every $(i,j)$.

We remark that there is a relation to TV regularization, since the
total variation of
the discrete signal $(\min({y}_{i+1,j},{y}_{i-1,j}),x_{i,j},\max({y}%
_{i+1,j},{y}_{i-1,j}))$ is minimal for $x_{i,j}\in\Omega_{1}^{i,j}$.

Analogously to \eqref{eq:denoising:horizontal_constraint} we
define constraint sets for every horizontal, vertical and diagonal
edge of the underlying grid graph with vertices corresponding to
the pixel positions $(i,j)$:
\begin{equation}
\begin{aligned} \Omega_2^{i,j} &:=[\min({y}_{i,j+1},{y}_{i,j-1}),\max({y}_{i,j+1},{y}_{i,j+1})],\\ \Omega_3^{i,j} &:=[\min({y}_{i+1,j+1},{y}_{i-1,j-1}),\max({y}_{i+1,j+1},{y}_{i-1,j-1})],\\ \Omega_4^{i,j} &:=[\min({y}_{i+1,j-1},{y}_{i-1,j+1}),\max({y}_{i+1,j-1},{y}_{i-1,j+1})].\\ \end{aligned} \label{eq:denoising:constraint_sets}%
\end{equation}
In total, we end up with four different constraint sets for pixel $(i,j)$.


Note that we can express each set $\Omega_{s}^{i,j}$ in the form%
\begin{equation}
\Omega_{s}^{i,j}=[-r_{s}^{i,j}({Y}),r_{s}^{i,j}({Y})]+m_{s}^{i,j}({Y}),
\end{equation}
where
\begin{equation}
\begin{aligned} r_1^{i,j}({Y}) &=\frac{1}{2}|{y}_{i+1,j}-{y}_{i-1,j}|, &&& m_1^{i,j}({Y}) &=\frac{1}{2}({y}_{i+1,j}+{y}_{i-1,j}), \end{aligned}
\end{equation}
and $r_{s}^{i,j}({Y})$, and $m_{s}^{i,j}({Y})$, $s=2,3,4$, defined accordingly for the vertical and the two diagonal directions.

We further slightly generalize the sets $\Omega_{s}^{i,j}$ by introducing a
scaling factor $\alpha>0$ and re-define%
\begin{equation}
\Omega_{s}^{i,j}:=\alpha\,[-r_{s}^{i,j}({Y}),r_{s}^{i,j}({Y})]+m_{s}^{i,j}%
({Y}),\text{ for }s=1,2,3,4.
\end{equation}
Based on these local constraint sets, we look at the CFP%
\begin{equation}
\text{Find }X{=(x}_{i,j})\in\mathbb{R}^{{n_{1}}\times{n_{2}}}\text{ such that
}x_{i,j}\in\bigcap_{s=1}^{4}\Omega_{s}^{i,j}\text{ for all pixels }(i,j).
\label{eq:denoising:cfp}%
\end{equation}
Recall that our approach presented in Section~\ref{sec:Algs} allows
$m_{s}^{i,j}$ to depend on $X$, in contrast with \eqref{eq:denoising:cfp}. So,
we consider the set-valued mappings
\begin{equation}
C_{s}^{i,j}(X):=\alpha\,[-r_{s}^{i,j}({Y}),r_{s}^{i,j}({Y})]+m_{s}^{i,j}(X),
\label{eq:denoising:scaled_Cij_x}%
\end{equation}
and derive the ICFP
\begin{equation}
\text{Find }X\in\mathbb{R}^{{n_{1}}\times{n_{2}}}\text{ such that }X\in%
{\displaystyle\prod\limits_{i,j}}
\left(  \bigcap_{s=1}^{4}C_{s}^{i,j}(X)\right)  , \label{eq:denoising:icfp}%
\end{equation}
where $\prod\limits_{i,j}$ represents the product of sets. Note that the variable sets $C_{s}^{i,j}(X)$
attain the form \eqref{eq:Cs}.

\subsection{Experiments}

In our computational experiments we consider two test images. The Shepp-Logan
phantom of Figure~\ref{fig:denoising:CFPversusICFP}(a), displayed with
Gaussian noise of zero mean and variance $0.1$ in
Figure~\ref{fig:denoising:CFPversusICFP}(b), and an ultrasound image,
displayed in Figure~\ref{fig:denoising:CFPversusICFP}(c).

For the ICFP~\eqref{eq:denoising:icfp}, we implemented both
Algorithms~\ref{alg:GC-sim} and Algorithm~\ref{alg:GC-seq}. The
parameters were chosen to be $\gamma_k=\frac{1}{16}$ and $1000$
iteration steps for Algorithm~\ref{alg:GC-sim} and $\sigma_{\beta
k+j}:=\displaystyle{\frac{1}{k}}$ for $j=0,1,\dots,\beta$ and
$1000$ iteration steps for Algorithm~\ref{alg:GC-seq}. If not
noted otherwise, we use $\beta=100$ for the latter.

We compared the performance of the CFP \eqref{eq:denoising:cfp}
with that of the ICFP \eqref{eq:denoising:icfp}. For both problems
we focused on the simultaneous projection method of
Algorithm~\ref{alg:GC-sim}, which is known to converge even in the
inconsistent case (i.e., the case where the intersection of
constraint sets is empty), see, e.g, \cite{Ceg12,popa12}.

In Figure~\ref{fig:denoising:CFPversusICFP} we present results of comparing
the CFP and ICFP approaches on the noisy Shepp-Logan test image and on the
ultrasound image that we used. We also provide close-ups for a specific region
of interest, in order to highlight the differences mainly in texture. We
observe that the CFP is not suited to denoise the data. In contrast, solving
the ICFP leads to a denoised image.

In Figure~\ref{fig:denoising:varying_a} we study the influence of the
parameter $\alpha$ in \eqref{eq:denoising:scaled_Cij_x} for the ICFP. Our
experiments show, that this parameter influences the smoothness of the result.
The smaller $\alpha$ the smoother the result becomes.

In Figure~\ref{fig:denoising:empty_sets}, we plot the percentage of constraint sets $\cap_{s=1}^{4}C_{s}^{i,j}(x^{k})$, $i=1,2,\dots,{n_{1}},$ $j=1,2,\dots,{n_{2},}$ depending on the iteration index $k$ for CFP and
ICFP, using both algorithms for the latter.

Note that in the CFP the constraint sets do not vary and, therefore, we have a constant fraction of
empty intersections, while in the ICFP we observe that the number of
empty intersections decreases significantly to a final percentage of 3.5\%,
showing that the constraint sets adapt to the unknown in a meaningful way.

We note that Algorithm~\ref{alg:GC-seq} requires a
$\beta$-steering sequence for convergence. To demonstrate the
influence of this sequence on the speed of convergence we conduct
an experiment with different values of $\beta$. Obviously the
choice of $\beta$ influences the solution, to which the iterative
sequence generated by the algorithm converges. We found that the
results are of similar quality (the SSIM index proposed by Wang et
al.~\cite{Wang2004} varies only in the range $0.6802\pm 0.0001$).
Due to the non-uniqueness of the solution, we can measure
convergence only with respect to the individual solution the
algorithm converges to. To this end, we assume that the sequence
converges within the first 1000 steps. This assumption is
satisfied, since the difference $\|X^{k-1}-X^k\|$ becomes small
for $k\approx 1000$. The plot in Figure~\ref{fig:denoising:betas}
shows the distances $d_k:=\|X^k-X^{1000}\|$ for
$\beta=10,20,50,100$ during the first $1000$ steps. We observe
that the larger $\beta$ is, the faster $d_k$ decreases. Thus, for
a faster convergence a larger value of $\beta$ is advantageous.

We conclude that the proposed ICFP has the capability of denoising image data.
Although the approach in its current form cannot cope with complex
state-of-the-art denoising approaches, our experiments demonstrate the
usefulness of imposing constraints on image intensities. Moreover, we see the
potential for further improvements of the approach, for example by
additionally making the parameter $\alpha$ depend on the unknown, or by
combining the ICFP with an objective function for denoising, that has to be
optimized subject to the given adaptive constraints.

\newcommand{\figsize}{3.5cm}%
\begin{figure}[H]%
\captionsetup[subfigure]{justification=centering}\centering%
\begin{tabular}
[c]{cccc}%
\begin{subfigure}[t]{3.5cm}\includegraphics[width=3.5cm]{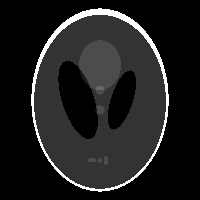}\subcaption{}\end{subfigure} &
\begin{subfigure}[t]{3.5cm}\includegraphics[width=3.5cm]{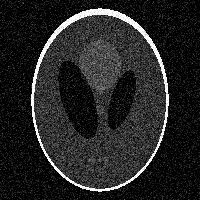}\subcaption{}\end{subfigure} &
\multicolumn{2}{c}{\begin{subfigure}[t]{\figsize}\includegraphics[width=3.5cm]{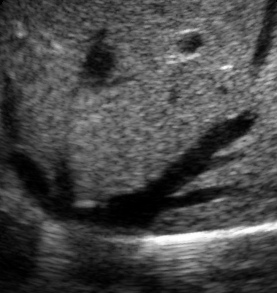}\subcaption{}\end{subfigure}}%
\\
\begin{subfigure}[t]{3.5cm}\includegraphics[width=3.5cm]{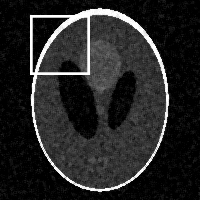}\subcaption{}\end{subfigure} &
\begin{subfigure}[t]{3.5cm}\includegraphics[width=3.5cm]{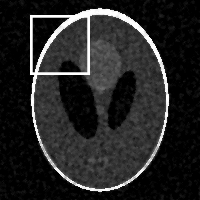}\subcaption{}\end{subfigure} &
\begin{subfigure}[t]{3.5cm}\includegraphics[width=3.5cm]{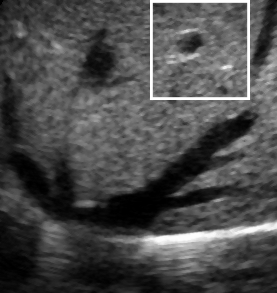}\subcaption{} \end{subfigure} &
\begin{subfigure}[t]{3.5cm}\includegraphics[width=3.5cm]{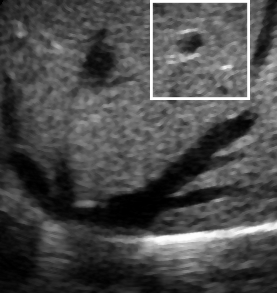}\subcaption{}\end{subfigure}\\
\begin{subfigure}[t]{3.5cm}\includegraphics[width=3.5cm]{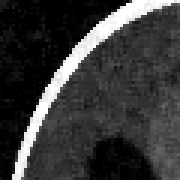}\subcaption{}\end{subfigure} &
\begin{subfigure}[t]{3.5cm}\includegraphics[width=3.5cm]{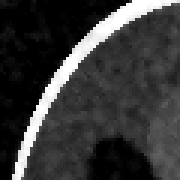}\subcaption{}\end{subfigure} &
\begin{subfigure}[t]{3.5cm}\includegraphics[width=3.5cm]{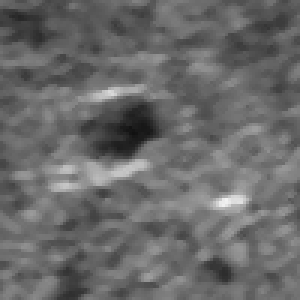}\subcaption{}\end{subfigure} &
\begin{subfigure}[t]{3.5cm}\includegraphics[width=3.5cm]{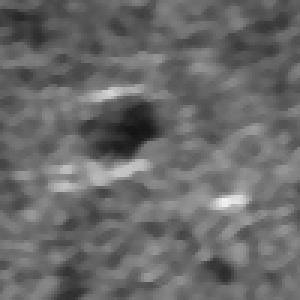}\subcaption{}\end{subfigure}
\end{tabular}
\caption{Comparison of the results of the CFP \eqref{eq:denoising:cfp} (d),
(f) and the ICFP \eqref{eq:denoising:icfp} (e), (g) and close-ups (h)-(k) of
the regions marked in white. From the close-ups, we observe that the ICFP is
better suited as a denoising method than the CFP. }%
\label{fig:denoising:CFPversusICFP}%
\end{figure}

\begin{figure}[H]
\captionsetup[subfigure]{justification=centering} \centering
\begin{subfigure}[t]{.35\linewidth}
\includegraphics[width=\linewidth]{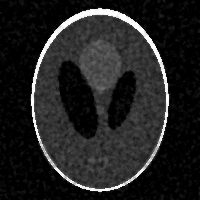}\
\includegraphics[width=\linewidth]{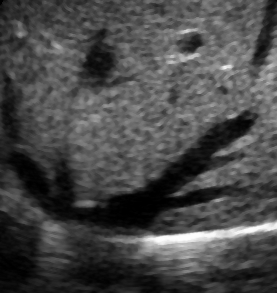}\\
\subcaption{{$\alpha=1$}}
\end{subfigure}
\begin{subfigure}[t]{.35\linewidth}
\includegraphics[width=\linewidth]{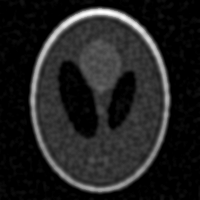}\
\includegraphics[width=\linewidth]{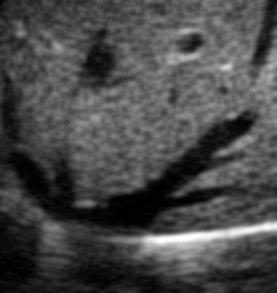}\\
\subcaption{{$\alpha=0.1$}}
\end{subfigure}
\caption{Results for varying parameter $\alpha$ (cf.
\eqref{eq:denoising:scaled_Cij_x}). Observe that decreasing $\alpha$ leads
to a smoother image. }%
\label{fig:denoising:varying_a}%
\end{figure}

\begin{figure}[H]
\begin{center}
\includegraphics[width=.8\linewidth]{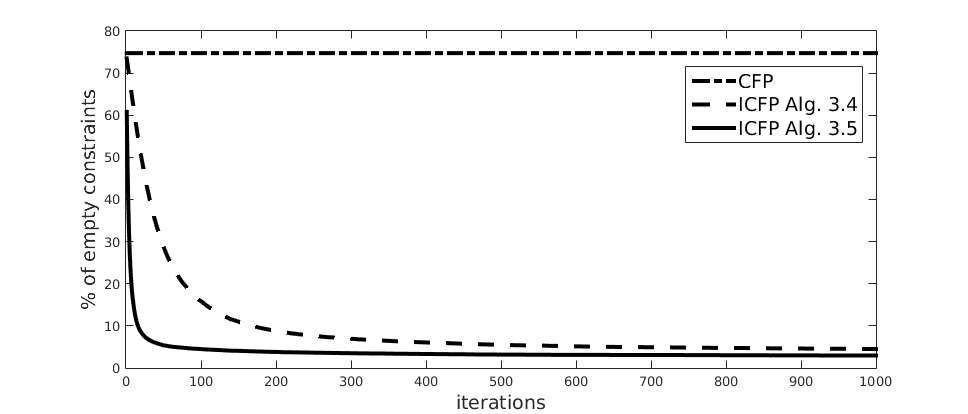}
\end{center}
\caption{Percentage of empty constraint sets $\cap_{s=1}^{4}C_{s}^{i,j}%
(X)$, $i=1,2,\dots,{n_{1}}$, $j=1,2,\dots,{n_{2}}$ (inconsistent
cases) with $x$ varying during the iterations. We compare the
algorithm for the CFP (dash-dotted), and for ICFP
Algorithm~\ref{alg:GC-sim} (dashed) and the sequential
Algorithm~\ref{alg:GC-seq} (solid). We observe, that for the ICFP,
the percentage significantly decreases during the iterations,
while for the CFP by
definition it is constant. }%
\label{fig:denoising:empty_sets}%
\end{figure}

\begin{figure}[H]
\begin{center}
\includegraphics[width=.8\linewidth]{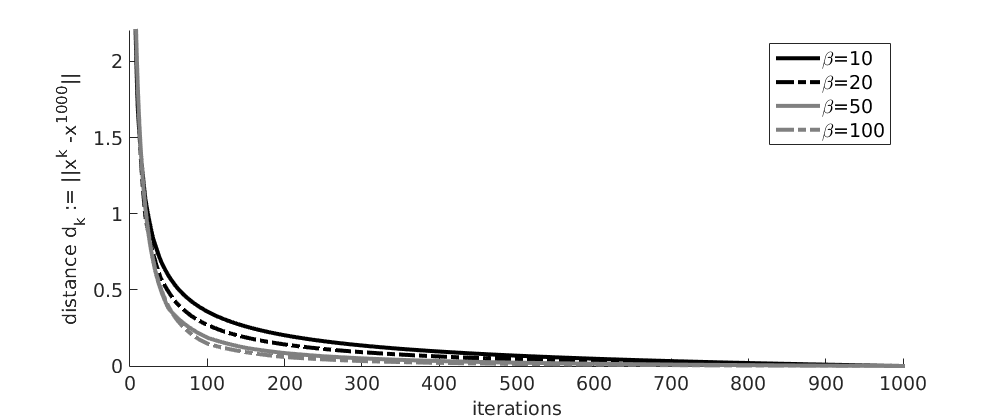}
\end{center}
\caption{ Convergence for different $\beta$-steering sequences
with $\beta=10$ (black solid), $20$ (black dashed), $50$ (gray
solid) and  $100$ (gray dashed) in Algorithm~\ref{alg:GC-seq}
applied for smoothing the phantom image. Note that the limit
depends on the chosen sequence. We depict $\|X^k-X^{1000}\|$,
assuming convergence within the first 1000 steps.
We conclude that a larger $\beta$ is advantageous for a faster
convergence.}%
\label{fig:denoising:betas}%
\end{figure}

\section{Summary and further discussion\label{sec:summary}}

In this paper we consider the implicit convex feasibility problem (ICFP) where
the variable sets are obtained by shifting, rotating and linearly-scaling
fixed, closed convex sets. By reformulating the problem as an unconstrained
minimization we present two algorithmic schemes for solving the problem, one
simultaneous and one sequential. We also comment that other first-order
methods can be applied if, for example, the problem is phrased as a
variational inequality problem. We illustrate the usefulness of the ICFP as a
new modeling technique for imposing constraints on image intensities in image denoising.

Two instances of the ICFP, the wireless sensor network (WSN) positioning
problem and the new image denoising approach suggest the applicability
potential of the ICFP. In this direction we recall the nonlinear multiple-sets
split feasibility problem (NMSSFP) introduced by Li et al. \cite{lhz} and
later by Gibali et al. \cite{gks14}.\newline

In this problem the linear operator $T:%
\mathbb{R}
^{n}\rightarrow%
\mathbb{R}
^{m}$ in the split convex feasibility problem (\ref{eq:scfp}) is nonlinear
and, therefore, the corresponding proximity function is not necessarily convex
which means that additional assumptions on $T$ are required, such as
differentiability. Within this framework it will be interesting to know, for
example, what are the necessary assumptions on $m:%
\mathbb{R}
^{n}\rightarrow%
\mathbb{R}
^{n}$ in Definition \ref{ex:1} which will guarantee convergence of our
proposed schemes.

Another direction is when the unitary matrices $U_{s}$ are not given in
advance but generated via some procedure; for example, given a linear
transformation $M:%
\mathbb{R}
^{n}\rightarrow%
\mathbb{R}
^{n\times n}$, such that for all $x\in%
\mathbb{R}
^{n}$, $M(x)=U_{[x]}$ is a unitary matrix. The linearity assumption on $M$
will guarantee that our analysis here will still hold true. For a nonlinear
$M$ our present analysis will not hold or, at least, not directly
hold.\bigskip

\textbf{Acknowledgments}. We thank the anonymous referees for
their comments and suggestions which helped us improve the paper.
The first author's work was supported by Research Grant No.
2013003 of the United States-Israel Binational Science Foundation
(BSF).


\end{document}